\numberwithin{equation}{section}
\theoremstyle{plain}
\newtheorem{theorem}{Theorem}[section]
\newtheorem{lemma}[theorem]{Lemma}
\newtheorem{corollary}[theorem]{Corollary}
\newtheorem{proposition}[theorem]{Proposition}
\theoremstyle{definition}
\newtheorem{definition}[theorem]{Definition}
\newtheorem{case[theorem]}{Case}
\newcommand{\R}{{\mathbb R}}
\newcommand{\Z}{{\mathbb Z}}
\newcommand{\T}{{\mathbb T}}
\renewcommand{\P}{{\mathbb P}}
\newcommand{\supp}{\operatorname{supp}}
\theoremstyle{remark}
\newtheorem{remark}[theorem]{Remark}
\numberwithin{equation}{section}
\def\R{\Bbb R}
\begin{document}

\title{Uniform distribution and geometric incidence theory}

\author{A. Gafni, A. Iosevich, and E. Wyman}

\date{\today}

\address{Department of Mathematics, University of Mississippi, University, MS}
\email{argafni@olemiss.edu}

\address{Department of Mathematics, University of Rochester, Rochester, NY}
\email{iosevich@math.rochester.edu}

\address{Department of Mathematics, University of Rochester, Rochester, NY}
\email{emmett.wyman@rochester.edu}

\thanks{The second listed author is supported in part by the National Science Foundation grant no. HDR TRIPODS - 1934962}

\maketitle

\begin{abstract} A celebrated unit distance conjecture due to Erd\H os says that that the unit distances cannot arise more than $C_{\epsilon}n^{1+\epsilon}$ times (for any $\epsilon>0$) among $n$ points in the Euclidean plane (see e.g. \cite{SST84} and the references contained therein). In three dimensions, the conjectured bound is $Cn^{\frac{4}{3}}$ (see e.g. \cite{KMSS12} and \cite{Z19}). In dimensions four and higher, this problem, in its general formulation, loses meaning because the Lens example shows that one can construct a set of $n$ points in dimension $4$ and higher where the unit distance arises $\approx n^2$ times (see e.g. \cite{B97}). However, the Lens example is one-dimension in nature, which raises the possibility that the unit distance conjecture is still quite interesting in higher dimensions under additional structural assumptions on the point set. This point of view was explored in \cite{I19}, \cite{IS16}, \cite{IMT12}, \cite{IRU14}, \cite{OO15} and has led to some interesting connections between the unit distance problem and its continuous counterparts, especially the Falconer distance conjecture (\cite{Falc85}). 

In this paper, we study the unit distance problem and its variants under the assumption that the underlying family of point sets is uniformly distributed. We prove several incidence bounds in this setting and clarify some key properties of uniformly distributed sequences in the context of incidence problems in combinatorial geometry. 

\end{abstract} 


\section{Introduction}

The theory of uniform distribution has a long and distinguished history. Recall that a sequence of points $\{v_n\}$ in the unit cube ${[0,1]}^d$, $d \ge 1$, is said to be \emph{uniformly distributed} if for every continuous function $f$, 
\begin{equation} \label{unifregular} 
	\lim_{N \to \infty} \frac{1}{N} \sum_{n=1}^N f(v_n)=\int_{{[0,1]}^d} f(x) dx.
\end{equation} 

 $f(x)=e^{2 \pi i x \cdot k}$, where $k=(k_1, \dots, k_d)$ is a non-zero integer lattice point and  plugging this function into (\ref{unifregular}) yields 
$$ 
	\lim_{N \to \infty} \frac{1}{N} \sum_{n=1}^N e^{2 \pi i v_n \cdot k}=\int_{{[0,1]}^d} e^{2 \pi i x \cdot k} dx=0.
$$
The classical Weyl criterion says that the converse is also true, namely $\{v_n\}$ is uniformly distributed in the unit cube if and only if 
$$ \sum_{n=1}^N e^{2 \pi i v_n \cdot k}=o(N)$$ for every non-zero $k \in {\Bbb Z}^d$. 

The purpose of this paper is to study various aspects of uniform distribution in the context of incidence theorems in combinatorial geometry. One of the central results in geometric combinatorics is the celebrated Szemeredi-Trotter incidence theorem (\cite{ST83}), which bounds the number of incidences between $n$ points and $m$ lines in the Euclidean plane.  More precisely, let $P$ be a set of $n$ points in ${\Bbb R}^2$ and $L$ be a set of $m$ lines. Then 
$$ I(P,L):=\# \{(p,l) \in P \times L: p \in l \} \le C(n+m+{(nm)}^{\frac{2}{3}})$$ 
Moreover, this bound is, in general, best possible. 

\vskip.125in 

The lines can be replaced by circles, and, more generally, other families of geometric objects satisfying certain intersection axioms (see e.g. \cite{SST84}). In the case of circles, there is a related, celebrated unit distance conjecture due to Erd\H os, which says that, with $P$ as above, 
\begin{equation} \label{unitdistancebound} \# \{(p,p') \in P \times P: |p-p'|=1 \} \lessapprox n, \end{equation} where here, and throughout, $X \lessapprox Y$ with the controlling parameter $n$ means that for every $\epsilon>0$ there exists $C_{\epsilon}>0$ such that $X \leq C_{\epsilon}n^{\epsilon} Y$. It is not difficult to see that (\ref{unitdistancebound}) can be viewed as a bound on the number of incidences between the points of $P$ and the unit circles centered at the points of $P$. The best known result in this direction, due to Spencer, Szemeredi and Trotter says that the left hand side in (\ref{unitdistancebound}) is bounded by $Cn^{\frac{4}{3}}$. 

The unit distance problem, in the form stated above, is also quite interesting in three dimensions. The best known result, due to Zahl, says that the number of unit distances determined by $n$ points in ${\Bbb R}^3$ is $\leq Cn^{\frac{295}{197}}$ (\cite{Z19}), beating the previous bound $Cn^{\frac{3}{2}}$ (\cite{KMSS12}). It is interesting to note that in three dimensions, the left hand side of (\ref{unitdistancebound}) is the number of incidences between points of a finite set $P$ and the unit spheres centered at those points. In dimension four and higher, however, the unrestricted version of the unit distance problem becomes meaningless due to the following beautiful example due to Lenz (see e.g. \cite{B97} for a thorough description; see also a nice description in \cite{S08}). Let $P$ consist of $\frac{n}{2}$ equally spaced points on the circle 
$$ \{(\cos(\theta), \sin(\theta), 0, 0): 0 \leq \theta \leq 2 \pi\}$$ and let $P'$ be a set of $\frac{n}{2}$ equally spaced points on the circle 
$$ \{(0,0,\cos(\phi), \sin(\phi)): 0 \leq \phi \leq 2 \pi \}.$$ 

Since all the distances between points of $P$ and $P'$ are equal to $\sqrt{2}$, it is not difficult to see that in dimensions four and higher, the best general bound for the left hand side of (\ref{unitdistancebound}) is the trivial bound $Cn^2$, which makes it clear that without additional assumption, there is no interesting theory in dimensions four and higher. 

However, the set of points in the Lenz example is rather one-dimensional, which suggests that non-trivial results may be obtained under additional structural assumptions on the underlying point set $P$. This point of view has been studied by the second listed author of this paper in a variety of settings (see e.g. \cite{I19}, \cite{IS16}), using the notion of discrete energy. Given a family of finite point sets $\{P_n\}$ in ${\Bbb R}^d$, $d \ge 2$, $n^{-\frac{1}{s}}$-separated for some $s \in (0,d)$ and containing $n$ points each, define the discrete energy of $P_n$ by 
$$ {\mathcal I}_s(P_n)=\frac{1}{n^2} \sum_{p \not=p' \in P_n} {|p-p'|}^{-s}.$$ 
One can show (see e.g. \cite{I19}, \cite{IS16}) that if ${\mathcal I}_s(P_n)$ is bounded above with constants independent of $n$, then, the number of incidences between $n$ points and $n$ annuli of radius $\approx 1$ and thickness $\approx n^{-\frac{1}{s}}$ is bounded by $Cn^{2-\frac{1}{s}}$ if $s>\frac{d+1}{2}$. 

Another natural structural assumption that would avoid the Lenz example above is uniform distribution because, in particular, a uniformly distributed set of points cannot be concentrated on a lower dimensional algebraic variety. 

\begin{definition}\label{def gamma-uniform} For $\gamma \in \left(0, \frac{1}{2} \right]$, we say that a sequence of points $\{v_n\}$ is  $\gamma$-uniformly distributed in ${[0,1]}^d$,  if for every $\epsilon > 0$, there exists a uniform constant $C_\epsilon$ such that for every $k \in {\Bbb Z}^d \setminus 0$, 
$$
	\left| \frac{1}{N} \sum_{n=1}^N e^{2 \pi i k \cdot v_n} \right| \leq C_\epsilon |k|^\epsilon N^{-\gamma}.
$$
\end{definition}

We discuss the validity of Definition \ref{def gamma-uniform} in Section \ref{def discussion}, where we show that random sequences are generically $\gamma$-uniformly distributed for all $\gamma < 1/2$. We also discuss why the factor $|k|^\epsilon$ is necessary in the bound on the right.

\vskip.125in 

To state our results, we require some notation.

\bigskip

\noindent
\textbf{Notation.} Given $x \in \R^d$, we write
\[
	\|x\| = \min_{m \in \Z^d} |x - m|,
\]
the distance $x$ is to the integer lattice $\Z^d$. Note, $\| \cdot \|$ is well-defined on the torus $\T^d$. In particular, $\|x - y\|$ is precisely the distance between points $x$ and $y$ in $\T^d$ as measured by the Riemannian metric. Given a subset $S$ of $\R^d$ or $\T^d$, we write $|S|$ to mean the Lebesgue measure of $S$.

\bigskip

Our main results are the following.

\vskip.125in 

\begin{theorem} \label{sphereonsteroidsth} Suppose that $\{v_n\}$ is a $\gamma$-uniformly distributed sequence of points on 
${\Bbb T}^d$, 
$d \ge 2$. Let 
$$\Omega_{a,b} = \{(x,y) \in \T^d \times \T^d : a \leq \|x - y\| \leq b \},$$ with $\frac{1}{100} \leq a \leq b<\frac{1}{2}$. Then 
\begin{equation} \label{udstest} 
	\# \left\{(n,m) \in {[1,N]}^2: a \leq \| v_n-v_m \| \leq b \right\} = N^2 |\Omega_{a,b}| + R( a,b, N), \end{equation}
where
$$
	|R(a, b, N)| \leq C_\epsilon N^{2 -\frac{4\gamma}{d+1} + \epsilon}.
$$

In particular, if $ b-a \ge cN^{-\frac{4\gamma}{d+1} + \epsilon}$ for some positive constant $c > 0$ and $\epsilon > 0$, then the number of incidences between the first $N$ points of $\{v_n\}$ and $N$ annuli centered at those points is comparable (above and below) to $N^2 (b-a)$ for sufficiently large $N$ since $|\Omega_{a,b}| \approx  b-a.$
\end{theorem}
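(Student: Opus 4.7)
The plan is to compare the incidence count to a Fourier series on $\T^d$, then play the $\gamma$-uniform distribution hypothesis off against the Fourier decay of the indicator of a spherical shell. Let $\chi_{a,b}$ denote the indicator of $\{x \in \T^d : a \le \|x\| \le b\}$, so that the left-hand side of (\ref{udstest}) equals $\sum_{n,m=1}^N \chi_{a,b}(v_n - v_m)$. Because $\chi_{a,b}$ is not smooth, I first sandwich it between two mollified approximations $\chi_\delta^\pm := \chi_{a\mp\delta,\,b\pm\delta} * \eta_\delta$, where $\eta$ is a symmetric nonnegative bump with $\int\eta=1$ and $\eta_\delta(x) = \delta^{-d}\eta(x/\delta)$. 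These satisfy $\chi_\delta^- \le \chi_{a,b} \le \chi_\delta^+$ and $\int_{\T^d}(\chi_\delta^\pm - \chi_{a,b}) = O(\delta)$.

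For each $\chi_\delta^\pm$, expanding as a Fourier series on $\T^d$ and summing in $n,m$ gives
\[
	\sum_{n,m=1}^N \chi_\delta^\pm(v_n - v_m) = N^2 \int_{\T^d} \chi_\delta^\pm + \sum_{k \in \Z^d \setminus 0} \widehat{\chi_\delta^\pm}(k)\, |S_N(k)|^2,
\]
where $S_N(k) = \sum_{n=1}^N e^{2\pi i k \cdot v_n}$. The integral term contributes $N^2|\Omega_{a,b}| + O(N^2\delta)$, so the $O(N^2\delta)$ slack can be folded into the error.

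To control the oscillatory tail I would combine two estimates. First, Definition \ref{def gamma-uniform} gives $|S_N(k)|^2 \le C_\epsilon |k|^{2\epsilon} N^{2-2\gamma}$. Second, the classical stationary-phase/Bessel asymptotic for the Fourier transform of a Euclidean ball indicator yields
\[
	|\widehat{\chi_{a,b}}(k)| \lesssim |k|^{-(d+1)/2},
\]
uniformly for $a,b \in [1/100,\,1/2)$; the torus-vs-Euclidean identification is harmless since $b < 1/2$ keeps the shell inside a fundamental domain. Convolving with $\eta_\delta$ multiplies the Fourier coefficients by $\widehat\eta(\delta k)$, which decays faster than any polynomial in $\delta|k|$ and effectively truncates the sum at $|k| \lesssim 1/\delta$. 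A radial integration then produces
\[
	\sum_{k\neq 0} |\widehat{\chi_\delta^\pm}(k)|\, |S_N(k)|^2 \;\lesssim\; N^{2-2\gamma} \sum_{0 < |k| \lesssim 1/\delta} |k|^{2\epsilon - (d+1)/2} \;\lesssim\; N^{2-2\gamma}\, \delta^{-(d-1)/2 - 2\epsilon}.
\]

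The total error is thus $\lesssim N^2\delta + N^{2-2\gamma}\delta^{-(d-1)/2-2\epsilon}$. Setting these two terms equal yields the optimal choice $\delta = N^{-4\gamma/(d+1)}$, which (after absorbing the $\epsilon$-losses) recovers the claimed bound $C_\epsilon N^{2 - 4\gamma/(d+1) + \epsilon}$. I expect the main technical obstacle to be the Fourier-decay step: one must verify that the $|k|^{-(d+1)/2}$ estimate is uniform as $a,b$ approach each other and as $b \to 1/2$, and one should handle separately the degenerate regime $b-a < \delta$ where $\chi_\delta^- \equiv 0$; in that regime $N^2|\Omega_{a,b}| \lesssim N^2\delta$ so the main term itself is absorbed into the stated error.
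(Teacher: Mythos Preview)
Your proposal is correct and follows essentially the same route as the paper: mollify the annulus indicator, expand in Fourier series on $\T^d$, invoke the $\gamma$-uniform distribution bound on $|S_N(k)|^2$, use the stationary-phase decay $|\widehat{\chi_{a,b}}(k)|\lesssim |k|^{-(d+1)/2}$, and optimize at $\delta=N^{-4\gamma/(d+1)}$. The only organizational difference is that you sandwich $\chi_{a,b}$ between $\chi_\delta^-\le\chi_{a,b}\le\chi_\delta^+$, whereas the paper uses a single mollified $\chi_\delta$ and controls the pointwise discrepancy by an auxiliary function $\tilde\chi_\delta$ built from the surface measures $\sigma_a,\sigma_b$ (thus also invoking $|\widehat\sigma_r(\xi)|\lesssim|\xi|^{-(d-1)/2}$); your sandwich avoids this extra ingredient but lands on the identical error balance.
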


This result gives an upper bound for point-sphere incidences for $\frac12$-uniformly distributed sequences.

\begin{corollary} Let $\{v_n\}$ be a $\frac12$-uniformly distributed sequence of points in $\T^d$ and fix a distance $0 < t < 1/2$. Then,
\[
	\# \left\{(n,m) \in {[1,N]}^2 : \|v_n - v_m\| = t \right\} \leq C_\epsilon N^{2 - \frac{2}{d+1} + \epsilon}.
\]
\end{corollary}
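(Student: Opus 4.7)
The plan is to deduce the corollary as a limiting case of Theorem \ref{sphereonsteroidsth}, applied to annuli that shrink onto the sphere $\|x-y\| = t$. For any $\delta > 0$ small enough that $t - \delta \geq 1/100$ and $t + \delta < 1/2$, set $a = t - \delta$ and $b = t + \delta$. Every pair $(n,m)$ with $\|v_n - v_m\| = t$ lies in $\Omega_{a,b}$, so
\[
\#\{(n,m) \in [1,N]^2 : \|v_n - v_m\| = t\} \leq \#\{(n,m) : a \leq \|v_n - v_m\| \leq b\}.
\]

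Theorem \ref{sphereonsteroidsth} with $\gamma = 1/2$ bounds the right-hand side by $N^2 |\Omega_{a,b}| + |R(a,b,N)|$, where $|R(a,b,N)| \leq C_\epsilon N^{2 - 2/(d+1) + \epsilon}$. The crucial observation is that the constant $C_\epsilon$ does not depend on the width $b - a$. Since $|\Omega_{a,b}| \approx 2\delta$, we get
\[
\#\{(n,m) : \|v_n - v_m\| = t\} \leq C N^2 \delta + C_\epsilon N^{2 - 2/(d+1) + \epsilon}.
\]
The left-hand side is independent of $\delta$, so letting $\delta \to 0^+$ eliminates the first term and yields the stated bound.

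The only point of care is the hypothesis $a \geq 1/100$ in Theorem \ref{sphereonsteroidsth}, which restricts the direct application to $t \in (1/100, 1/2)$. For $t \in (0, 1/100]$, one must verify that the proof of Theorem \ref{sphereonsteroidsth} extends to any fixed $a > 0$ (the threshold $1/100$ appears to be cosmetic and chosen for convenience in the main statement), so that the same shrinking-annulus argument delivers the bound uniformly in $t \in (0, 1/2)$. This boundary issue is the sole non-routine step; no further analytic work is required beyond Theorem \ref{sphereonsteroidsth} itself.
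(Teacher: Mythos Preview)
Your argument is correct and is precisely the intended (implicit) derivation of the corollary from Theorem~\ref{sphereonsteroidsth}; the paper states the corollary immediately after the theorem without a separate proof, relying on exactly this observation that the remainder bound is uniform in $a,b$ while $|\Omega_{a,b}|\to 0$ as the annulus collapses. One small simplification: since the theorem is stated for $a\le b$, you may take $a=b=t$ directly, in which case $|\Omega_{t,t}|=0$ and the count equals $R(t,t,N)$ outright, avoiding the limit; your remark that the threshold $1/100$ is cosmetic (the stationary-phase bounds in the lemma only improve for small radii) is accurate.
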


\vskip.125in 

Our next result deals with thickened hyperplanes instead of annuli, which, interestingly, leads to a considerably different numerology. 

\begin{theorem} \label{dotproductonsteroidsth} Suppose that $\{v_n \}$ and $\{w_m\}$ are $\gamma$-uniformly distributed sequences of points on ${[0,1]}^d$, 
$d \ge 2$. Let $\Omega_{a,b} = \{ (x,y) \in [0,1]^d \times [0,1]^d : a \leq x \cdot y \leq  b\}$, $a \ge \frac{1}{100}$. Let $\psi$ be a smooth, non-negative function with support contained in the open cube $(0,1)^d$. Then 
\begin{equation} \label{uddot} 
	\sum_{\left\{(n,m) \in {[1,N]}^2: a \leq v_n \cdot w_m \leq b \right\}} \psi(v_n)\psi(w_m)= N^2 \int \int_{\Omega_{a,b}} \psi(x) \psi(y) dxdy+ R( a,b, N), \end{equation}
where for any $\epsilon>0$, there exists $C_{\epsilon}>0$ such that 
$$
	|R( a,b, N)| \leq C_{\epsilon}N^{2-\gamma+\epsilon}.
$$

In particular, if $ b-a \ge cN^{-\gamma + \epsilon}$, fixed constant $c > 0$ and $\epsilon > 0$, then the number of incidences between $N$ points of $\{w_m\}$ and $N$ thin slabs with normals $\{v_n\}$ is comparable to (above and below) $N^2 (b-a)$ for sufficiently large $N$ since $|\Omega_{a,b}| \approx b-a$.
\end{theorem}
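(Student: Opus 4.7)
The plan is to mollify the indicator $\chi_{[a,b]}$ and apply Definition \ref{def gamma-uniform} twice via Fourier series on $\T^d$ --- once in the $v$-variable with $w$ fixed, then in the $w$-variable on the main term produced.

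First I sandwich $\chi_{[a,b]}$ between smooth upper and lower approximants $\chi_\delta^\pm$ satisfying $\int (\chi_\delta^+ - \chi_\delta^-)\,dt \lesssim \delta$ and $\|(\chi_\delta^\pm)^{(j)}\|_\infty \lesssim \delta^{-j}$, and write the corresponding sums $T_\delta^\pm$ squeezing $T$. The free parameter $\delta$ will be set to $N^{-\gamma}$ at the end. For fixed $w$, I expand $f_w^\delta(v) := \psi(v)\chi_\delta^\pm(v \cdot w)$ as a Fourier series on $\T^d$ and invoke the $\gamma$-uniform distribution hypothesis mode-by-mode to obtain
\[
	\sum_{n=1}^N \psi(v_n)\chi_\delta^\pm(v_n \cdot w) = N \int \psi(v)\chi_\delta^\pm(v\cdot w)\,dv \;+\; O\!\left(C_\epsilon N^{1-\gamma} E(w)\right),
\]
with $E(w) = \sum_{k \neq 0} |k|^\epsilon |\widehat{f_w^\delta}(k)|$.

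The crux of the argument is to show $E(w) \lesssim N^\epsilon$. Decomposing $v = s\hat w + v_\perp$ and $k = k_\parallel \hat w + k_\perp$ where $k_\parallel = k\cdot \hat w$, one has
\[
	\widehat{f_w^\delta}(k) = \int \chi_\delta^\pm(s|w|)\, e^{-2\pi i k_\parallel s}\, Q_s(k_\perp)\, ds,
\]
where $Q_s(k_\perp)$, the partial Fourier transform of $\psi$ along $w^\perp$, is Schwartz in $k_\perp$ thanks to the smoothness of $\psi$. The key point is that the total variation of $\chi_\delta^\pm$ is $O(1)$ (\emph{not} $\delta^{-1}$), so a single integration by parts in $s$ yields
\[
	|\widehat{f_w^\delta}(k)| \lesssim (1+|k_\perp|)^{-M} \min(1, 1/|k_\parallel|),
\]
with higher-order integrations supplying a rapid cutoff around $|k_\parallel| = 1/\delta$. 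Because this bound decays rapidly in $k_\perp$ and is integrable-with-log in $k_\parallel$, the sum $\sum_{k \neq 0} |k|^\epsilon |\widehat{f_w^\delta}(k)|$ reduces essentially to $\sum_{1 \le |k_\parallel| \le 1/\delta} |k_\parallel|^{\epsilon - 1} \lesssim \log(1/\delta) \lesssim N^\epsilon$. Summing the inner error over $m$ with weight $\psi(w_m)$ then costs $O(N^{2-\gamma+\epsilon})$.

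The $k=0$ main term is $N \sum_m H_\delta(w_m)$ with $H_\delta(w) := \psi(w) \int \psi(v) \chi_\delta^\pm(v\cdot w)\, dv$. Because $\supp\psi$ is bounded away from the origin, $H_\delta \to H_0$ in $C^\infty(\T^d)$ as $\delta \to 0$, so the Fourier coefficients $\widehat{H_\delta}(k')$ decay rapidly uniformly in $\delta$. A second application of Definition \ref{def gamma-uniform} gives $\sum_m H_\delta(w_m) = N\int H_\delta + O(N^{1-\gamma})$, contributing $O(N^{2-\gamma})$. Combined with the smoothing error $O(N^2\delta)$, the choice $\delta = N^{-\gamma}$ absorbs everything into $O(N^{2-\gamma+\epsilon})$, as claimed. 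The principal obstacle is the anisotropic Fourier estimate: a naive $L^\infty$ bound $\|\partial^\alpha f_w^\delta\|_\infty \lesssim \delta^{-|\alpha|}$ gives only $E(w) \lesssim \delta^{-d}$ and would force an inferior exponent $N^{2-\gamma/(d+1)}$; recovering the sharp $2-\gamma$ requires exploiting the one-dimensional structure of $v \mapsto \chi_\delta^\pm(v\cdot w)$ to confine the $\delta^{-1}$ blow-up of derivatives to a single direction in frequency space.
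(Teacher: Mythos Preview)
Your proposal is correct and follows the same overall strategy as the paper: mollify $\mathbf 1_{[a,b]}$ at scale $\delta$, apply the $\gamma$-uniform distribution hypothesis once in each variable, and exploit the one-dimensional structure of $(v,w)\mapsto v\cdot w$ to keep the Fourier-side loss at $\delta^{-\epsilon}$ rather than $\delta^{-d}$, then optimize at $\delta=N^{-\gamma}$.

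The only genuine difference is in how that one-dimensional structure is extracted. You expand $f_w^\delta(v)=\psi(v)\chi_\delta^\pm(v\cdot w)$ directly on $\T^d$ and then decompose each lattice frequency $k$ into components parallel and perpendicular to $w$, using bounded total variation of $\chi_\delta^\pm$ to get $|\widehat{f_w^\delta}(k)|\lesssim (1+|k_\perp|)^{-M}\min(1,|k_\parallel|^{-1})$. The paper instead applies Fourier inversion on $\R$ to $\rho_\delta$ (respectively to $\mathbf 1_{[a,b]}*\rho_\delta$), obtaining an integral over a real parameter $\lambda$ with weight $\widehat\rho(\delta\lambda)$ (respectively $\widehat{\mathbf 1}_{[a,b]}(\lambda)\widehat\rho(\delta\lambda)$); the rapid decay of $\widehat\psi(k-\lambda y)$ then localizes the $\Z^d$-sum near $k\approx \lambda y$, and the extra factor $|\widehat{\mathbf 1}_{[a,b]}(\lambda)|\lesssim |\lambda|^{-1}$ plays exactly the role of your single integration by parts. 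The paper's route avoids the mild bookkeeping that $k_\parallel=k\cdot\hat w$ is not integer-valued on the lattice, while your route makes the anisotropy visible directly in $k$-space; both yield the same error $O_\epsilon(\delta^{-\epsilon}N^{-\gamma})$ for the smoothed count and hence the same final bound.
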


\begin{corollary}
	Suppose $\{v_n\}$ and $\{w_m\}$ are both $\frac12$-uniformly distributed sequences in $[0,1]^d$, $d \geq 2$, and fix $0 < t < d$. Then for any $\epsilon>0$ there exists $C_{\epsilon}>0$ such that 
\[
	\# \left\{(n,m) \in {[1,N]}^2 : v_n \cdot w_m = t \right\} \leq C_\epsilon N^{\frac32 + \epsilon}. 
\]

\end{corollary}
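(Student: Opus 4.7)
The plan is to sandwich the exact incidence count inside a thin-slab count and apply Theorem \ref{dotproductonsteroidsth}. Set $a = t$, $b = t + \delta$ with $\delta = c N^{-1/2}$ for some small constant $c > 0$; assuming $t \geq \tfrac{1}{100}$ (the case of small $t$ would require a minor separate argument), the hypothesis of the theorem is satisfied. Every pair with $v_n \cdot w_m = t$ lies in the slab $a \leq v_n \cdot w_m \leq b$, so for any admissible test function $\psi$,
\[
    \sum_{v_n \cdot w_m = t} \psi(v_n) \psi(w_m) \;\leq\; \sum_{a \leq v_n \cdot w_m \leq b} \psi(v_n) \psi(w_m).
\]
Choosing $\psi \in C_c^\infty((0,1)^d)$ with $\psi \geq 1$ on a fixed compact set $K \subset (0,1)^d$ and using $\gamma = 1/2$, Theorem \ref{dotproductonsteroidsth} bounds the right-hand side by $N^2 \int \int_{\Omega_{a,b}} \psi(x)\psi(y)\,dx\,dy + O(N^{3/2+\epsilon})$. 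Since $|\Omega_{a,b}| \approx \delta = N^{-1/2}$, both terms are $O(N^{3/2+\epsilon})$, yielding the desired bound for pairs with $v_n, w_m \in K$.

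To extend to all pairs in $[0,1]^d \times [0,1]^d$, I would cover $[0,1]^d$ (up to a thin boundary strip) by finitely many compact sets $K_1, \ldots, K_M \subset (0,1)^d$ with corresponding cutoffs $\psi_j$, and sum the resulting estimates. The number of covering sets $M$ is independent of $N$, so the total remains $O(N^{3/2+\epsilon})$.

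The main obstacle is handling indices for which $v_n$ or $w_m$ lies in the uncovered boundary strip, since no admissible $\psi$ can equal $1$ on $\partial[0,1]^d$. To address this, I would invoke the Erd\H{o}s--Tur\'an--Koksma inequality to convert the $\tfrac12$-uniform distribution hypothesis into a discrepancy estimate of $O(N^{-1/2+\epsilon})$; this implies that at most $O(N^{1/2+\epsilon})$ indices have $v_n$ within distance $N^{-1/2}$ of $\partial[0,1]^d$. The trivial bound of at most $N$ exact incidences per such boundary point then contributes $O(N^{3/2+\epsilon})$ in total, matching the desired estimate. Verifying this discrepancy bound from Definition \ref{def gamma-uniform} (expected to be a standard computation carried out in Section \ref{def discussion}) is the single technical ingredient beyond the theorem itself.
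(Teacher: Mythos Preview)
The paper states this corollary without proof, treating it as immediate from Theorem~\ref{dotproductonsteroidsth} by taking $a=b=t$ so that the main term vanishes and only the error $O(N^{3/2+\epsilon})$ survives. You go further than the paper by recognising that the theorem only bounds the $\psi$-weighted sum $\sum\psi(v_n)\psi(w_m)$ and that passing to the unweighted count requires an additional argument; the paper simply glosses over this point.

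There is, however, a mismatch in your boundary step. You cover $(0,1)^d$ by finitely many \emph{fixed} compact sets $K_1,\dots,K_M$ with $N$-independent cutoffs $\psi_j$, and then invoke discrepancy to show that only $O(N^{1/2+\epsilon})$ points lie within $N^{-1/2}$ of $\partial[0,1]^d$. These two pieces do not fit together: since the $K_j$ are fixed, their union misses a boundary strip of some \emph{fixed} width $\eta>0$, which by discrepancy contains $\sim\eta N$ points (not $O(N^{1/2+\epsilon})$); the trivial per-point bound then gives $O(\eta N^2)$ boundary incidences, which is too large. The natural repair is to let the cutoff $\psi=\psi_N$ depend on $N$ so that $\psi_N\equiv1$ on $[N^{-1/2},1-N^{-1/2}]^d$, but then the implicit constants in Theorem~\ref{dotproductonsteroidsth} depend on $\psi_N$ and hence on $N$. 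To make this work one must revisit the proof of Lemma~\ref{dotproductonsteroidslem} and check that the $\psi$-dependence enters only through the sum $\sum_{k\ne0}|\widehat\psi(k-\lambda y)|\,|k|^\epsilon$; for $\psi_N$ of the above form this sum is $O\big((\log N)^d(1+|\lambda|)^\epsilon\big)$, so only an $N^\epsilon$ loss is incurred and the corollary does follow. This is a genuine (if routine) verification that cannot be replaced by citing the theorem as a black box.
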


\vskip.125in 

\begin{remark} 
For $d=2,3$, Theorem \ref{sphereonsteroidsth} gives a bound on the restricted unit distance problem of $N^{\frac{4}{3} + \epsilon}$ and $N^{\frac{3}{2} + \epsilon}$, respectively, which are very close to matching what is known in the unrestricted case. Moreover, our results involve incidences between thickened spheres, which are generally not covered by combinatorial techniques. \end{remark} 

\begin{remark} When $\gamma= \frac{1}{2}$, the bound in Theorem \ref{sphereonsteroidsth} matches the bound from \cite{I19} and  \cite{IS16} (see also \cite{OO15}) for point sets with bounded discrete energy, by setting $b = N^{-\frac{1}{2}}$.  This suggests a deeper connection between uniform distribution and discrete energy. We shall address this point in a sequel. \end{remark} 

\begin{remark} In Theorem \ref{generically 1/2} below we show that an i.i.d.~sequence of random variables is $\gamma$-uniformly distributed for all $\gamma<\frac{1}{2}$ with probability $1$. Consider $N$ randomly chosen points in the unit cube ${[0,1]}^d$, $d \ge 2$. Then the expected number of pairs of points separated by at least $1-\epsilon$ and at most $1+\epsilon$ is approximately equal to $N$ times the volume of a sphere of radius $1$ and thickness $\epsilon$, which is approximately $N \epsilon$. Thus the expected number of pairs is $\approx N^2 \epsilon$. The same argument works for spheres replaced by planes. This shows that the $N^2 \epsilon$ term in the results above is correct, though the nature of the error terms is much more subtle. \end{remark}

\subsection{The general framework of the proofs} \label{FRAMEWORK}

\vskip.125in 

We will identify the unit cube $[0,1]^d$ with the torus $\T^d = \R^d/\Z^d$. Let $\Omega \subset \T^d \times \T^d$ be some nice relation on $\T^d$ (e.g. positive measure in $\T^d \times \T^d$, smooth boundary). The goal is, given a sequence $\{v_n\}_{n = 1}^\infty$ in $\T^d$, to estimate
\[
	\#\{ (n,m) : n,m \in \{1,\ldots,N\}, \ (v_n, v_m) \in \Omega\}
\]
quantitatively.

This problem can be viewed as a twist on the classical Weyl's criterion, for which a strategy for estimating such quantities was outlined by Colzani, Gigante, and Travaglini \cite{CGT11}. Our approach to Theorems \ref{sphereonsteroidsth} and \ref{dotproductonsteroidsth} roughly follows theirs, but with some careful decisions about how certain estimates are made.

The rough strategy goes as follows. We construct a smooth cutoff $\chi_\delta$ which differs from the indicator function of $\Omega$ only in an $\delta$-neighborhood of the boundary. We can estimate the count above by $\sum_{n,m = 1}^N \chi_\delta(v_n,v_m)$ with discrepancy
\[
	\sum_{n,m = 1}^N |(\mathbf 1_\Omega - \chi_\delta)(v_n,v_m)| \leq \sum_{n,m = 1}^N \tilde \chi_\delta (v_n,v_m),
\]
where here $\tilde \chi_\delta$ is a smooth function supported on an $\delta$-neighborhood of the boundary of $\Omega$ so that $\tilde \chi_\delta \geq |\mathbf 1_\Omega - \chi_\delta|$. We then estimate both
\[
	\sum_{n,m = 1}^N \chi_\delta(v_n,v_m) \qquad \text{ and } \qquad \sum_{n,m = 1}^N \tilde \chi_\delta(v_n,v_m)
\]
using the Fourier series. The main term for the count will be the contribution of the zeroth Fourier coefficient of $\chi_\delta$ to the first sum, which is typically equal to the measure $|\Omega|$ by design. The remainder term is bounded by
\[
	\left| \sum_{k \neq 0} \sum_{n,m = 1}^N \widehat \chi_\delta(k) e^{2\pi i k \cdot (v_n,v_m)} \right| + \left| \sum_{k \in \Z^{2d}} \sum_{n,m = 1}^N \widehat{ \tilde \chi}_\delta(k) e^{2\pi i k \cdot (v_n,v_m)} \right|,
\]
and by the triangle inequality also by
\begin{equation} \label{remainder reduction}
	N^2 \iint \tilde \chi_\delta(x,y) \, dx \, dy + \sum_{k \neq 0} (|\widehat \chi_\delta(k)| + |\widehat {\tilde \chi}_\delta(k)|) \left| \sum_{n,m = 1}^N  e^{2\pi i k \cdot (v_n,v_m)} \right|.
\end{equation}

Estimating the remainder then becomes a game of choosing $\delta$ which optimizes the sum against the integral. The $\gamma$-uniform distribution property helps us bound the sum in $n$ and $m$ in the second term, while the regularity of the boundary of $\Omega$ usually allows us to bound the first term by $N^2 \delta$.

\vskip.25in


\section{Discussion of the definition of a $\gamma$-uniformly distributed sequence} \label{def discussion}

Definition \ref{def gamma-uniform} is a quantitative analog of the usual notion of uniform distribution, up to a factor which is independent of $N$.  The restriction of $\gamma\le \frac12$ should seem natural to number theorists who are familiar with the philosophy of \emph{square root cancellation}.  The square root barrier is an extension of a basic fact from probability:  For a simple random walk in $\mathbb Z$  (i.e., the sum of a uniform random sequence in $\{-1,1\}$), the expected distance from the origin after $N$ steps is on the order of $\sqrt N$.  Thus we cannot hope that Definition \ref{def gamma-uniform} would hold for a uniform random sequence when $\gamma > \frac12$.  In fact, as we will see below, it is not possible for any sequence to be $\gamma$-uniformly distributed with $\gamma > \frac12$.  The threshold case $\gamma = \frac12$ characterizes sequences that most closely resemble random sequences, at least with respect to this ``random walk'' statistic.  This fact is demonstrated in the following proposition.

\begin{proposition}\label{generically 1/2}
A sequence of i.i.d. random variables drawn with uniform probability from the cube $[0,1]^d$ is, with probability $1$, $\gamma$-uniformly distributed for all $\gamma < 1/2$.
\end{proposition}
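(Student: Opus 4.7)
The plan is to run a moment-method plus Borel--Cantelli argument. For $k \in \Z^d \setminus \{0\}$ and $N \geq 1$, set $S_{N,k} = \sum_{n=1}^N e^{2\pi i k \cdot v_n}$. Since the $v_n$ are i.i.d.\ uniform on $[0,1]^d$ and $\int_{[0,1]^d} e^{2\pi i k \cdot x}\,dx = 0$ when $k \neq 0$, each summand has modulus $1$ and mean zero, so Proposition~\ref{generically 1/2} reduces to showing that for every fixed $\gamma < 1/2$ and $\epsilon > 0$ the bound $|S_{N,k}| \leq C(\omega)\,|k|^\epsilon N^{1-\gamma}$ holds simultaneously in all $N, k \neq 0$ with probability one; the full proposition then follows by intersecting over a countable family of pairs $(\gamma_j, \epsilon_i)$ with $\gamma_j \uparrow 1/2$ and $\epsilon_i \downarrow 0$.

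The key quantitative input is an even-moment estimate
$$
\E\,|S_{N,k}|^{2p} \leq C_p\,N^p
$$
for every integer $p \geq 1$, with $C_p$ independent of $k$. This is obtained by expanding $(S_{N,k}\overline{S_{N,k}})^p$: independence and the mean-zero property kill every term except those in which each $e^{2\pi i k \cdot v_{n_i}}$ is paired with some $\overline{e^{2\pi i k \cdot v_{m_j}}}$ at a matching index, and counting such configurations gives the claim. (A Hoeffding or Khintchine estimate applied to the real and imaginary parts would work equally well.)

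Markov's inequality then yields
$$
\P\bigl(|S_{N,k}| > |k|^\epsilon N^{1-\gamma}\bigr) \leq C_p\,|k|^{-2p\epsilon}\,N^{-p(1-2\gamma)}.
$$
Choosing $p$ large enough that $p(1-2\gamma) > 1$ and $2p\epsilon > d$ makes the iterated sum $\sum_{N \geq 1}\sum_{k \neq 0}$ of these probabilities converge. The Borel--Cantelli lemma then guarantees that almost surely only finitely many pairs $(N,k)$ violate the desired inequality; on the (random, finite) exceptional set the trivial bound $|S_{N,k}| \leq N$ may be absorbed by enlarging $C(\omega)$.

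The only real obstacle is bookkeeping: the definition of $\gamma$-uniform distribution insists on uniformity in $k$ for \emph{every} $\epsilon > 0$, and we want the result simultaneously for all $\gamma < 1/2$. The factor $|k|^\epsilon$ is precisely what lets the lattice sum $\sum_{k \neq 0} |k|^{-2p\epsilon}$ converge once $p$ is chosen large in terms of $\epsilon$, and since any $\epsilon > 0$ dominates some $1/i$ (with $|k|^{1/i} \leq |k|^\epsilon$ for $|k| \geq 1$) and any $\gamma < 1/2$ is dominated by some $1/2 - 1/j$ (with $N^{-\gamma_j} \leq N^{-\gamma}$ for $N \geq 1$), the countable intersection described above recovers the full conclusion.
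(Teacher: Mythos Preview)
Your proof is correct. Both you and the paper run a tail-bound plus Borel--Cantelli argument, but with different quantitative inputs. The paper applies Hoeffding's inequality to obtain the subgaussian estimate $\P\bigl(|N^{-1}S_{N,k}| \geq |k|^\epsilon N^{-\gamma}\bigr) \leq 4\exp(-\tfrac12|k|^{2\epsilon}N^{1-2\gamma})$, then handles the union over $k$ by observing that the event is vacuous once $|k|^\epsilon > N^\gamma$, so only $O(N^{d\gamma/\epsilon})$ frequencies contribute and the exponential decay in $N$ absorbs this polynomial factor. You instead use the even-moment bound $\E|S_{N,k}|^{2p} \leq C_p N^p$ and Markov's inequality to get polynomial decay $|k|^{-2p\epsilon}N^{-p(1-2\gamma)}$, choosing $p$ large enough that both the $k$-sum and the $N$-sum converge outright. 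Your route is arguably more elementary (no concentration inequality beyond Markov and a direct moment expansion), while the paper's exponential bound is sharper but unnecessary here; in both cases the $|k|^\epsilon$ factor in the definition is what makes the union over frequencies tractable.
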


\begin{proof}
Let $v_1,v_2,\ldots$ be the sequence in the proposition. It suffices to show this sequence is, for any fixed $\gamma < 1/2$, almost certainly $\gamma$-uniformly distributed. Note for $k \neq 0$, $e^{2\pi i k \cdot v_n}$ are also i.i.d. random variables with mean $0$ and have real and imaginary parts ranging between $[-1,1]$. Applying Hoeffding's inequality to both real and imaginary parts and using the union bound yields
\[
	\P\left[\left| \frac1N \sum_{n = 1}^N e^{2\pi i k \cdot v_n} \right| \geq |k|^\epsilon N^{- \gamma} \right] \leq 4\exp\left(-\frac12|k|^{2\epsilon} N^{1-2\gamma}\right).
\]
Furthermore, the probability above is nonzero only when $|k| \leq N^{\gamma/\epsilon}$. Hence, another union bound yields
\begin{align*}
	\P\left[\sup_{k \neq 0} \left| |k|^{-\epsilon} N^{\gamma - 1} \sum_{n = 1}^N e^{2\pi i k \cdot v_n} \right| \geq 1 \right] &\leq \sum_{0 < |k| \leq N^{\gamma/\epsilon}} 4\exp\left(-\frac12|k|^{2\epsilon} N^{1-2\gamma}\right) \\
	&\leq C N^{d\gamma/\epsilon} \exp\left(-\frac12 N^{1-2\gamma}\right).
\end{align*}
We fix $N_0 \geq 1$ and again bound
\begin{align*}
	\P\left[\sup_{N \geq N_0} \sup_{k \neq 0} \left| |k|^{-\epsilon} N^{\gamma - 1} \sum_{n = 1}^N e^{2\pi i k \cdot v_n} \right| \geq 1 \right] &\leq \sum_{N \geq N_0} C N^{d\gamma/\epsilon} \exp\left(-\frac12 N^{1-2\gamma}\right),
\end{align*}
which vanishes as $N_0 \to \infty$. We conclude
\[
	\limsup_{N \to \infty} \sup_{k \neq 0} |k|^{-\epsilon} N^\gamma \left| \frac{1}{N} \sum_{n = 1}^N e^{2\pi i k \cdot v_n} \right| \leq 1
\]
almost certainly. The proposition follows.
\end{proof}

The $|k|^\epsilon$ growth factor in Definition \ref{def gamma-uniform} is necessary in the sense that, if it were not present, \emph{no} sequence would be $\gamma$-uniformly distributed. To illustrate, we consider any sequence in $[0,1]$. Taking only the first $N$ points $v_1,\ldots, v_N$, there exists a possibly large frequency $k$ at which
\[
	\frac1N \sum_{n = 1}^N e^{2\pi i k v_n}
\]
is close to $1$. This is clear if $v_1,v_2,\ldots$ is a sequence of rational numbers, but it also holds in general as follows. For each $\epsilon > 0$, we use Dirichlet simultaneous approximation to produce a denominator $q \leq \epsilon^{-N}$ and numerators $p_1,\ldots,p_N$ for which
\[
	|q v_n - p_n| \leq \epsilon \qquad \text{ for } 1 \leq n \leq N.
\]
Hence,
\[
	\left| \frac1N \sum_{n = 1}^N e^{2\pi i q v_n} - 1 \right| \leq \frac1N \sum_{n = 1}^N \left| e^{2\pi i (q v_n - p_n)} - 1 \right| \leq \frac{1}{N} \sum_{n = 1}^N 2\pi \epsilon \leq 2\pi \epsilon.
\]

Intuitively, we do not expect a $\gamma$-uniformly distributed sequence to have much repetition. We quantify this here.

\begin{proposition}\label{support proposition}
	Let $\{v_n\}_{n = 1}^\infty$ be a $\gamma$-uniformly distributed sequence in $[0,1]^d$. Then for every $\epsilon > 0$, there exists a constant $c_\epsilon > 0$ for which 
	\[
		|\{v_1,\ldots,v_N\}| \geq c_\epsilon N^{2\gamma - \epsilon} \qquad N \geq 1.
	\]
\end{proposition}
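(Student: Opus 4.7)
The plan is to compare two estimates for the sum $\sum_{n,m=1}^N \phi_K(v_n - v_m)$, where $\phi_K$ is a carefully chosen non-negative kernel on $\T^d$. Write $M = |\{v_1,\ldots,v_N\}|$ and let $u_1,\ldots,u_M$ be the distinct values with multiplicities $N_1,\ldots,N_M$. Cauchy--Schwarz gives $\sum_j N_j^2 \geq N^2/M$, so whenever $\phi_K \geq 0$ this sum is bounded below by $\phi_K(0)\,N^2/M$, coming just from coincident pairs $v_n = v_m$. I will take $\phi_K$ to be the $d$-dimensional tensor Fej\'er kernel $\phi_K(x) = \prod_{j=1}^d F_K(x_j)$, whose key features are $\phi_K \geq 0$, $\phi_K(0) = K^d$, $\widehat \phi_K(0) = 1$, and non-negative Fourier coefficients $\widehat \phi_K(k) = \prod_j \max(1-|k_j|/K, 0)$ supported in $\{|k|_\infty < K\}$ with $\sum_k \widehat \phi_K(k) = \phi_K(0) = K^d$.

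For the upper bound, by Plancherel,
\[
    \sum_{n,m=1}^N \phi_K(v_n - v_m) = \sum_{k \in \Z^d} \widehat\phi_K(k)\,|S_k(N)|^2, \qquad S_k(N) := \sum_{n=1}^N e^{2\pi i k \cdot v_n}.
\]
The $k=0$ term contributes $N^2$. For $k \neq 0$, the $\gamma$-uniform distribution hypothesis supplies $|S_k(N)| \leq C_\eta |k|^\eta N^{1-\gamma}$, and on the support of $\widehat\phi_K$ we have $|k| \leq \sqrt{d}\,K$, so
\[
    \sum_{k \neq 0} \widehat\phi_K(k)\,|S_k(N)|^2 \leq C_\eta^2 d^\eta K^{2\eta} N^{2-2\gamma} \sum_{k \neq 0} \widehat\phi_K(k) \leq C'_\eta K^{d+2\eta} N^{2-2\gamma}.
\]
Combining with the lower bound and dividing through by $N^2$ produces the key inequality
\[
    \frac{K^d}{M} \leq 1 + C'_\eta K^{d+2\eta} N^{-2\gamma}.
\]

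The final step is to optimize in $K$: choose $K = \lfloor c N^{2\gamma/(d+2\eta)} \rfloor$ with $c$ small enough that the second right-hand term is at most $1$, so that $M \gtrsim K^d \gtrsim N^{2\gamma d/(d+2\eta)}$. Given any target $\epsilon > 0$ in the proposition, pick $\eta = \eta(\epsilon,\gamma,d) > 0$ small enough that $2\gamma d/(d+2\eta) \geq 2\gamma - \epsilon$, which yields $M \geq c_\epsilon N^{2\gamma - \epsilon}$ for $N$ large enough to ensure $K \geq 1$. The remaining finite range of $N$ is absorbed into $c_\epsilon$ via the trivial bound $M \geq 1$. There is no substantial obstacle here; the argument is a clean Parseval computation, and the only care required is in aligning the auxiliary parameter $\eta$ from the uniform distribution assumption with the target exponent $\epsilon$ of the proposition, which is purely algebraic.
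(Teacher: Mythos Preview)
Your argument is correct. Both your proof and the paper's are second-moment arguments: one computes $\sum_{n,m} \Phi(v_n - v_m)$ for a non-negative kernel $\Phi$ with non-negative Fourier coefficients, bounds it below via the diagonal contribution $\Phi(0)\sum_j N_j^2 \geq \Phi(0)N^2/M$, and above via Plancherel together with the $\gamma$-uniform bound on the Weyl sums.

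The difference lies in the choice of kernel. The paper takes $\Phi = B_\delta * B_\delta$ (it phrases this as Cauchy--Schwarz applied to $\mu_N * B_\delta$, which unwinds to exactly this), and must then invoke the stationary-phase decay $|\widehat B_1(\xi)|^2 \lesssim \min(1,|\xi|^{-d-1})$ to control the tail of the Fourier sum before optimizing at $\delta = N^{-2\gamma/d}$. You instead take the tensor Fej\'er kernel, whose Fourier coefficients are compactly supported in $\{|k|_\infty < K\}$, so the sum over $k \neq 0$ is finite and trivially bounded by $\phi_K(0) = K^d$ times the worst-case $|S_k(N)|^2$. This sidesteps any asymptotic input about oscillatory integrals and makes the optimization in $K$ completely transparent. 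The paper's formulation has the slight advantage of being packaged for a general family of finitely supported measures (which it re-uses for Corollary~\ref{no arithmetic structure}), but your argument adapts to that setting just as easily. Overall your route is a modestly more elementary variant of the same idea.
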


We remark that in the maximal case $\gamma = 1/2$, then the size of $\{v_1,\ldots,v_N\}$ is nearly comparable to $N$. This observation has two important consequences. First:

\begin{corollary} There are no $\gamma$-uniformly distributed sequences for $\gamma > 1/2$.
\end{corollary}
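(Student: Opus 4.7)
The plan is to extract the corollary as an immediate consequence of Proposition \ref{support proposition}. That proposition gives, for any $\gamma$-uniformly distributed sequence and any $\epsilon > 0$, a lower bound of the form $|\{v_1,\ldots,v_N\}| \geq c_\epsilon N^{2\gamma - \epsilon}$ on the size of the first $N$ terms (read as cardinality, as the surrounding remark makes clear). The obstruction I will exploit is that this cardinality is trivially at most $N$.

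First I would suppose, for contradiction, that $\{v_n\}$ is $\gamma$-uniformly distributed with $\gamma > 1/2$. Picking any $\epsilon > 0$ strictly smaller than $2\gamma - 1$ makes the exponent $2\gamma - \epsilon$ exceed $1$, so Proposition \ref{support proposition} combined with the trivial upper bound $|\{v_1,\ldots,v_N\}| \leq N$ yields $c_\epsilon N^{2\gamma - \epsilon} \leq N$ for every $N \geq 1$. Rearranging gives $c_\epsilon \leq N^{1 - 2\gamma + \epsilon}$, and since $1 - 2\gamma + \epsilon < 0$ the right-hand side tends to $0$ as $N \to \infty$, contradicting the positivity of the $N$-independent constant $c_\epsilon$.

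The hard part is not the corollary itself but Proposition \ref{support proposition}, which packages all of the real work; once it is in hand, the corollary is a one-line numerics check. The only subtle point is to make sure $c_\epsilon$ truly does not depend on $N$, but this is built into the statement of the proposition. Conceptually, the corollary is the quantitative face of the ``square root barrier'' discussion preceding it: even a generic i.i.d.\ random sequence only achieves $\gamma < 1/2$ (by Proposition \ref{generically 1/2}), so no deterministic sequence can beat random in this sense, and pushing past $1/2$ would force strictly more than $N$ distinct points among $v_1, \ldots, v_N$.
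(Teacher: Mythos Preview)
Your argument is correct and matches the paper's intended approach: the corollary is stated immediately after Proposition~\ref{support proposition} with no separate proof, precisely because it follows from the one-line comparison $c_\epsilon N^{2\gamma-\epsilon}\le N$ that you spell out. There is nothing to add.
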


Secondly, $1/2$-uniformly distributed sequences have little to no additive structure.

\begin{corollary}\label{no arithmetic structure}
	Let $\{v_n\}_{n = 1}^\infty$ be a $1/2$-uniformly distributed sequence in the torus $\T^d$. Then, for every $\epsilon > 0$, there exists a constant $c_\epsilon > 0$ for which 
	\[
		|\{v_n - v_m : n,m \in \{1,\ldots,N\}\}| \geq c_\epsilon N^{2 - \epsilon} \qquad N \geq 1.
	\]
\end{corollary}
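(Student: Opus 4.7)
The plan is to deduce Corollary \ref{no arithmetic structure} from Proposition \ref{support proposition} applied to the sequence of pairwise differences $\{v_n - v_m\}$, enumerated in a suitable order. I would enumerate pairs $(n,m) \in \Z_{\geq 1}^2$ in ``square blocks'', where block $k$ consists of the $2k-1$ pairs with $\max(n,m) = k$. The critical feature of this enumeration is that the first $N^2$ pairs are precisely $\{(n,m) : 1 \leq n, m \leq N\}$. Writing $w_j = v_{n_j} - v_{m_j} \in \T^d$ for the $j$-th such pair, the corollary then reduces to a lower bound on $|\{w_1,\ldots,w_{N^2}\}|$.

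The main step is to verify that $\{w_j\}$ is itself $\frac12$-uniformly distributed in $\T^d$. Given $\ell \in \Z^d \setminus 0$ and $M \geq 1$, I would write $M = N^2 + r$ with $0 \leq r < 2N + 1$, which gives
\[
    \sum_{j = 1}^M e^{2\pi i \ell \cdot w_j} = \left|\sum_{n=1}^N e^{2\pi i \ell \cdot v_n}\right|^2 + O(N),
\]
where the $O(N)$ absorbs the at most $2N+1$ terms from the partial block beyond $N^2$. The $\frac12$-uniform distribution of $\{v_n\}$ bounds the squared partial sum by $C_\epsilon^2 |\ell|^{2\epsilon} N$, and since $N \leq \sqrt{M}$, the whole expression is $O(|\ell|^{2\epsilon} \sqrt{M})$. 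Dividing by $M$ yields exactly a $\frac12$-uniform distribution bound for $\{w_j\}$ (with $\epsilon$ replaced by $2\epsilon$ in Definition \ref{def gamma-uniform}).

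To finish, I would apply Proposition \ref{support proposition} to $\{w_j\}$ at index $M = N^2$ and exponent $\gamma = \frac12$, which gives
\[
    |\{w_1, \ldots, w_{N^2}\}| \geq c_\epsilon (N^2)^{1 - \epsilon} = c_\epsilon N^{2 - 2\epsilon},
\]
and reindex $\epsilon \mapsto \epsilon/2$ to match the statement of the corollary. By the construction of the enumeration, $\{w_1, \ldots, w_{N^2}\}$ is exactly $\{v_n - v_m : n, m \in \{1, \ldots, N\}\}$, so this is the desired inequality.

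There is no serious obstacle: the only thing one needs to watch is the partial-block error in the second step, but because $r < 2N+1 = O(\sqrt M)$ this correction is dominated by the main term coming from the factorization $\sum_{n,m \leq N} e^{2\pi i \ell \cdot (v_n - v_m)} = \bigl|\sum_{n \leq N} e^{2\pi i \ell \cdot v_n}\bigr|^2$, and it costs nothing in the exponent. The corollary is therefore essentially a formal consequence of Proposition \ref{support proposition} and this square-root factorization.
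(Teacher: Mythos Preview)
Your proof is correct. The paper's argument is slightly different: rather than applying Proposition~\ref{support proposition} as stated, it invokes the more general form established \emph{inside} that proof, namely that any finitely supported probability measure $\mu$ satisfying $|\widehat\mu(k)| \leq C_\epsilon |k|^\epsilon N^{-\gamma}$ has $\#\supp\mu \geq c_\epsilon N^{2\gamma - \epsilon}$. This is then applied to the convolution $\mu_N = \nu_N * \nu_N$, where $\nu_N = N^{-1}\sum_{n \leq N}\delta_{v_n}$, using $|\widehat\mu_N(k)| = |\widehat\nu_N(k)|^2 \leq C_\epsilon |k|^{2\epsilon} N^{-1}$ to get the parameter $N_m = N^2$. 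Your square-block enumeration accomplishes the same squaring of the Weyl sum but repackages it as a genuine sequence $\{w_j\}$ to which Proposition~\ref{support proposition} applies as a black box; this is arguably cleaner, since the paper itself concedes that its route makes the corollary ``a corollary of the proof of the proposition rather than of the proposition itself.'' The underlying mechanism---that the relevant exponential sum over differences factors as $\bigl|\sum_{n\leq N} e^{2\pi i \ell\cdot v_n}\bigr|^2$ and hence is $O(N) = O((N^2)^{1/2})$---is the same in both arguments.
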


We first prove Proposition \ref{support proposition} and then prove Corollary \ref{no arithmetic structure} which, admittedly, is a corollary of the proof of the proposition rather than of the proposition itself.

\begin{proof}[Proof of Proposition \ref{support proposition}]
We make a couple convenient reductions. First, we identify $[0,1]^d$ with the flat torus $\T^d$. Secondly, we prove a slightly more general statement: Given a sequence of probability measures $\mu_1,\mu_2,\ldots$ on $\T^d$, each of finite support, satisfying the $\gamma$-uniform distributivity bounds
\begin{equation}\label{ud condition for measures}
	|\widehat \mu_m(k)| \leq C_\epsilon |k|^\epsilon N_m^{-\gamma} \qquad k \neq 0, \epsilon > 0,
\end{equation}
we have
\begin{equation}\label{general support proposition}
	\# \supp \mu_m \geq c_\epsilon N_m^{2\gamma - \epsilon} \qquad \epsilon > 0.
\end{equation}
The proposition follows after taking $\mu_N$ to be the partial averaging measure
\[
	\mu_N = \frac1N \sum_{n = 1}^N \delta_{v_n}.
\]

Let $B_\delta$ denote both the distance ball of radius $\delta$ centered at $0$ and its indicator function, as context indicates. We have
\[
	|\supp \mu_m * B_\delta| \leq \left| \bigcup_{x \in \supp \mu_m} (x + B_\delta) \right| \leq \sum_{x \in \supp \mu_m} |B_\delta| = \# \supp \mu_m |B_\delta|,
\]
and hence
\[
	\# \supp \mu_m \gtrsim \delta^{-d} |\supp \mu_m * B_\delta|.
\]
Using the Cauchy-Schwarz inequality,
\[
	\# \supp \mu_m \geq \frac{\left( \int \mu_m * B_\delta \right)^2}{\epsilon^d \int (\mu_m * B_\delta)^2}.
\]
The numerator is exactly $|B_\delta|^2 \gtrsim \delta^{2d}$. For the integral in the denominator, Plancherel and \eqref{ud condition for measures} yields
\begin{align*}
	\int (\mu_m * B_\delta)^2 &= \sum_{k \in \Z^d} |\widehat \mu_m(k)|^2 |\widehat B_\delta(k)|^2 \\
	&\lesssim \delta^{2d} + C_\epsilon \delta^{2d} N_m^{-2\gamma} \sum_{k \neq 0} |k|^\epsilon |\widehat B_1(\delta k)|^2.
\end{align*}
The standard stationary phase bounds yield $|\widehat B_1(\delta k)|^2 \lesssim \min(1, |\delta k|^{-d-1})$, so the above is
\begin{align*}
	\lesssim \delta^{2d} + C_\epsilon \delta^{2d} N_m^{-2\gamma} \left( \delta^{-d-\epsilon} + \delta^{-d-1} \sum_{|k| \geq \delta^{-1}} |k|^{-d-1 + \epsilon} \right) \lesssim \delta^{2d} + C_\epsilon \delta^{d - \epsilon} N_m^{-2\gamma}.
\end{align*}
To summarize, we have
\[
	\# \supp \mu_m \gtrsim \frac{1}{\delta^d + C_\epsilon \delta^{-\epsilon} N_m^{-2\gamma}}.
\]
Optimizing with $\delta = N_m^{-2\gamma/d}$ yields \eqref{general support proposition}.
\end{proof}

\begin{proof}[Proof of Corollary \ref{no arithmetic structure}]
Let
\[
	\nu_m = \frac{1}{m} \sum_{i = 1}^m \delta_{x_i}
\]
and note
\[
	|\widehat \nu_m(k)| \leq C_\epsilon |k|^\epsilon m^{-1/2} \qquad k \neq 0, \epsilon > 0
\]
by hypothesis. Let
\[
	\mu_m = \nu_m * \nu_m = \frac{1}{m^2} \sum_{i,j = 1}^m \delta_{x_i - x_j}.
\]
Note for $k \neq 0$, we have
\[
	|\widehat \mu_m(k)| = |\widehat \nu_m(k)|^2 \leq C_\epsilon |k|^{2\epsilon} m^{-1},
\]
and hence $\mu_m$ satisfies the uniform distribution condition \eqref{ud condition for measures} for $\gamma = 1/2$ and $N_m = m^2$. Hence by \eqref{general support proposition},
\[
	\#\supp \mu_m \geq c_\epsilon N^{2 - \epsilon} \qquad \epsilon > 0,
\]
as needed.
\end{proof}

\vskip.25in 


\section{Proof of Theorem \ref{sphereonsteroidsth}} \label{SPHERES}

\vskip.125in 

	Let $\rho$ be a nonnegative smooth function on $\R^d$ with support in the unit ball such that $\rho(x) \geq c > 0$ for $|x| \leq \delta/2$ and $\int \rho = 1$. Let $\rho_\delta$ be defined on the torus by taking
	\[
		\rho_\delta(x) = \delta^{-d} \sum_{k \in \Z^d} \rho(\delta^{-1} (x - k)) = \sum_{k \in \Z^d} e^{2\pi i x \cdot k} \widehat \rho(\delta k),
	\]
	with the second equality following by Poisson summation. Note, $\int \rho_\delta = \widehat \rho_\delta(0) = 1$ and $\supp \rho_\delta$ is contained in a metric ball of radius $\delta$ about $0$ in $\T^d$. 
	
	Let $A_{[a,b]}$ denote the annulus $\{x \in \T^d : a \leq \|x\| \leq b\}$ and set
	\[
		\chi_\delta = \mathbf 1_{A_{[a,b]}} * \rho_\delta.
	\]
	We must also select an appropriate smooth $\tilde \chi_\delta$ to bound the discrepancy
	\[
		|\chi_\delta - \mathbf 1_{A_{[a,b]}}| \leq \tilde \chi_\delta.
	\]
	To this end, we note
	\[
		|\chi_\delta(x) - \mathbf 1_{A_{[a,b]}}(x)| \leq 
		\begin{cases}
			0 & \text{ if } d_{\T^n}(\partial A_{[a,b]}, x) > \delta \\
			1 & \text{ if } d_{\T^n}(\partial A_{[a,b]}, x) \leq \delta
		\end{cases}
		\leq 2 c^{-1} \delta (\rho_{2\delta} * \sigma_a + \rho_{2\delta} * \sigma_b)
	\]
	where $\sigma_r$ denotes the surface measure on the sphere of radius $r$, and where the second inequality holds for $\delta$ smaller than some threshold depending on $a$. So, we set
	\[
		\tilde \chi_\delta = 2 c^{-1} \delta (\rho_{2\delta} * \sigma_a + \rho_{2\delta} * \sigma_b).
	\]
	We have
	\begin{equation}\label{spheres eq1}
		\left| N^{-2} \sum_{n,m = 1}^N \mathbf 1_{A_{[a,b]}}(v_n - v_m) - N^{-2} \sum_{n,m = 1}^N \chi_\delta (v_n - v_m) \right| \leq N^{-2} \sum_{n,m = 1}^N \tilde \chi_\delta(v_n - v_m).
	\end{equation}
	We turn our attention to the second term in absolute values, which we write as
	\begin{align*}
		N^{-2} \sum_{n,m = 1}^N \chi_\delta (v_n - v_m) &= N^{-2} \sum_{k \in \Z^d} \sum_{n,m=1}^N \widehat \chi_\delta(k) e^{2\pi i (v_n - v_m) \cdot k} \\
		&= |A_{[a,b]}| + \sum_{k \neq 0} \widehat \chi_\delta(k) \left( N^{-2} \sum_{n,m=1}^N e^{2\pi i (v_n - v_m) \cdot k} \right) \\
		&= |\Omega_{a,b}| + \sum_{k \neq 0} \widehat \chi_\delta(k) \left| N^{-1} \sum_{n=1}^N e^{2\pi i v_n \cdot k} \right|^2.
	\end{align*}
	We similarly write the right side of \eqref{spheres eq1} as
	\[
		N^{-2} \sum_{n,m = 1}^N \tilde \chi_\delta(v_n - v_m) = \widehat{\tilde \chi_\delta}(0) + \sum_{k \neq 0} \widehat{\tilde \chi_\delta}(k) \left| N^{-1} \sum_{n=1}^N e^{2\pi i v_n \cdot k} \right|^2.
	\]
	Turning again to \eqref{spheres eq1}, we find
	\begin{multline}\label{spheres eq2}
		\left| N^{-2} \sum_{n,m = 1}^N \mathbf 1_{A_{[a,b]}}(v_n - v_m) - |\Omega_{a,b}| \right| \\ \leq \widehat{\tilde \chi_\delta}(0) + \sum_{k \neq 0} \left( |\widehat \chi_\delta(k)| + |\widehat{\tilde \chi_\delta}(k)| \right) \left| N^{-1} \sum_{n=1}^N e^{2\pi i v_n \cdot k} \right|^2.
	\end{multline}
	We now must estimate the Fourier coefficients of $\chi_\delta$ and $\tilde \chi_\delta$ and invoke the $\gamma$-uniform distribution hypotheses to bound the rightmost term. First we establish that
	\begin{equation}\label{sphere zeroth coefficient}
		\widehat{\tilde \chi_\delta}(0) = 2c^{-1} \delta \int (\rho_{2\delta} * \sigma_a(x) + \rho_{2\delta} * \sigma_b(x)) \, dx \leq C_d c^{-1} \delta.
	\end{equation}
	Next, we bound $|\widehat \chi_\delta|$ and $|\widehat {\tilde \chi_\delta}|$. To this end, we have a standard lemma.
	
	\begin{lemma} (See Lemma 2.1 and its proof in \cite{Falc85}) There exists a constant $C$ such that for $\frac{1}{100} \leq a,b,r \leq \frac{1}{2}$ and $a < b$, 
		\[
			|\widehat \sigma_r(\xi)| \leq C |\xi|^{-\frac{d-1}{2}} \qquad \text{ and } \qquad |\widehat{\mathbf 1}_{A_{[a,b]}}(\xi)| \leq C {|\xi|}^{-\frac{d+1}{2}}.
		\]
	\end{lemma}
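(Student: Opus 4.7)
The plan is to obtain both bounds from standard stationary-phase / divergence-theorem estimates, splitting into the easy regime $|\xi| \lesssim 1$ and the interesting regime $|\xi| \gg 1$. For $|\xi|$ bounded, both quantities are trivially $O(1)$: indeed $|\widehat \sigma_r(\xi)| \leq \sigma_r(S_r) = O(r^{d-1}) = O(1)$ for $r \leq 1/2$, and $|\widehat{\mathbf 1}_{A_{[a,b]}}(\xi)| \leq |A_{[a,b]}| = O(1)$. These absorb into the constant $C$, so we may assume $|\xi|$ is larger than some threshold.

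For the first bound, I would use stationary phase on the oscillatory integral
\[
	\widehat \sigma_r(\xi) = \int_{S_r^{d-1}} e^{-2\pi i \xi \cdot x} \, d\sigma_r(x).
\]
The phase $x \mapsto \xi \cdot x$, restricted to the sphere of radius $r$, has exactly two critical points $\pm r \xi/|\xi|$. Because all $d-1$ principal curvatures of the sphere equal $1/r$, the tangential Hessian at each critical point is nondegenerate with eigenvalues of size $|\xi|/r$, and the classical stationary phase formula yields
\[
	|\widehat \sigma_r(\xi)| \lesssim |\xi|^{-(d-1)/2}
\]
with a constant uniform in $r \in [1/100, 1/2]$. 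An equivalent route is the explicit Bessel representation $\widehat \sigma_1(\xi) = c_d |\xi|^{-(d-2)/2} J_{(d-2)/2}(2\pi |\xi|)$ combined with the large-argument bound $|J_\nu(t)| \lesssim t^{-1/2}$, and the scaling identity $\widehat \sigma_r(\xi) = r^{d-1} \widehat \sigma_1(r\xi)$.

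For the second bound, I would extract the extra $|\xi|^{-1}$ from the radial direction. Writing everything in polar coordinates,
\[
	\widehat{\mathbf 1}_{A_{[a,b]}}(\xi) = \int_a^b r^{d-1} \widehat \sigma_1(r\xi)\, dr.
\]
Using the full asymptotic expansion
\[
	\widehat \sigma_1(\eta) = |\eta|^{-(d-1)/2} \bigl( A^+(|\eta|) e^{2\pi i |\eta|} + A^-(|\eta|) e^{-2\pi i |\eta|} \bigr) + O(|\eta|^{-(d+1)/2}),
\]
valid for $|\eta| \gtrsim 1$ with smooth symbols $A^\pm$ satisfying $|A^\pm(t)| + t|(A^\pm)'(t)| = O(1)$, the integrand becomes a pair of oscillations $e^{\pm 2\pi i r |\xi|}$ against a smooth amplitude in $r$ on the compact interval $[a,b] \subset [1/100, 1/2]$. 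A single integration by parts in $r$ then produces the extra factor $|\xi|^{-1}$, and since $r|\xi| \gtrsim |\xi|$ throughout the interval, both the boundary terms and the remainder are of size $|\xi|^{-(d+1)/2}$. Combining with the trivial bound for small $|\xi|$ gives the stated estimate.

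The only delicate point is checking that the amplitudes produced by the Bessel asymptotic expansion remain uniformly controlled over $r \in [1/100, 1/2]$ and $|\xi| \geq 1$; since in this regime $r|\xi|$ is bounded below, the asymptotic expansion is valid and the derivatives of the amplitude in $r$ are uniformly bounded, so the integration by parts causes no trouble. This is the same computation that underlies Falconer's original Lemma 2.1, which the authors cite.
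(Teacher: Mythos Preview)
Your argument is correct and is precisely the classical route: stationary phase (equivalently, Bessel asymptotics) for $\widehat\sigma_r$, followed by a radial integration by parts to gain the extra $|\xi|^{-1}$ for the annulus. The paper does not supply its own proof of this lemma but simply defers to Falconer's Lemma~2.1, whose proof is exactly the computation you outline; so there is nothing further to compare.
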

	
In light of the lemma and the $\gamma$-uniform distribution hypothesis, we bound the right side of \eqref{spheres eq2} by a constant multiple of
\[
	\delta + N^{-2\gamma} \sum_{k \neq 0} |\widehat \rho(\delta |k|)| |k|^{-\frac{d+1}{2} + 2\epsilon} + \delta N^{-2\gamma} \sum_{k \neq 0} |\widehat \rho(2\delta |k|)| |k|^{-\frac{d-1}{2} + 2\epsilon}.
\]
By the rapid decay of $\widehat \rho$ and routine estimates, the second and third terms are both bounded by $N^{-2\gamma} \delta^{-\frac{d-1}{2} - 2\epsilon}$, and we have that \eqref{spheres eq2} is bounded by
\[
	\delta + N^{-2\gamma} \delta^{-\frac{d-1}{2} - 2\epsilon}.
\]
Optimizing at $\delta = N^{-\frac{4\gamma}{d+1}}$ yields the theorem.

\vskip.25in 


\section{Proof of Theorem \ref{dotproductonsteroidsth}} 

The theorem will follow from the next lemma, for which we require a smooth, nonnegative function $\rho$ on $\R$ with $\int \rho(t) \, dt = 1$ and with $\supp \rho \subset [-1,1]$. We also take the scaling
\[
	\rho_\delta(t) = \delta^{-1} \rho(\delta^{-1} t)
\]
so that $\rho_\delta$ is an approximate identity as $\delta \to 0$.

\begin{lemma}\label{dotproductonsteroidslem}
	Suppose that $\{v_n\}$ and $\{w_m\}$ are $\gamma$-uniformly distributed sequences of points on $[0,1]^d$. Let $\rho_\delta$ be as above, and furthermore let $\psi$ be a smooth, non-negative function with support contained in the open cube $(0,1)^d$. Then,
	\begin{equation}\label{dotlem1}
		N^{-2} \sum_{n = 1}^N \sum_{m = 1}^N \rho_\delta(v_n \cdot w_m - t) \psi(v_n) \psi(w_m) = \iint \rho_\delta(x \cdot y - t) \psi(x) \psi(y) \, dx \, dy + O_\epsilon(\delta^{-1-\epsilon} N^{-\gamma}),
	\end{equation}
	where the constants implicit in the big-$O$ notation are independent of $t$ with $\frac{1}{100} \leq t \leq \frac{1}{2}$. Furthermore,
	\begin{equation}\label{dotlem2}
		N^{-2} \sum_{n = 1}^N \sum_{m = 1}^N \mathbf 1_{[a,b]} * \rho_\delta(v_n \cdot w_m) \psi(v_n) \psi(w_m) =  \iint \mathbf 1_{[a,b]} * \rho_\delta(x \cdot y) \psi(x) \psi(y) \, dx \, dy + O_\epsilon(\delta^{-\epsilon} N^{-\gamma}).
	\end{equation}
\end{lemma}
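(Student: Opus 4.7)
The plan is to apply the Fourier expansion strategy of Section \ref{FRAMEWORK} in two stages, iterating the $\gamma$-uniform distribution hypothesis over $\{w_m\}$ and then over $\{v_n\}$. Let $\Phi_\delta$ denote either $\rho_\delta(\cdot - t)$ in the setting of \eqref{dotlem1} or $\mathbf{1}_{[a,b]} * \rho_\delta$ in the setting of \eqref{dotlem2}, and set $H_\delta(x,y) = \Phi_\delta(x \cdot y)\psi(x)\psi(y)$. Fourier-expanding $H_\delta(v_n, \cdot)$ as a function on $\T^d$ and applying the Weyl-type argument over $\{w_m\}$, then repeating on the resulting $v_n$-function via $\{v_n\}$, produces the identity
\begin{multline*}
    N^{-2}\sum_{n, m = 1}^N H_\delta(v_n, w_m) - \iint H_\delta(x,y)\,dx\,dy \\
    = \sum_{k_1 \neq 0} \widehat{H_\delta}(k_1, 0)\,\mu_v(k_1) + N^{-1}\sum_{n = 1}^N \psi(v_n)\sum_{k_2 \neq 0}\widehat{f_{v_n}}(k_2)\,\mu_w(k_2),
\end{multline*}
where $\mu_v(k) = N^{-1}\sum_n e^{2\pi i k \cdot v_n}$ (and $\mu_w$ analogously) and $f_y(x) = \Phi_\delta(x \cdot y)\psi(x)$. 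The $\gamma$-uniform distribution hypothesis yields $|\mu_v(k)|, |\mu_w(k)| \leq C_\epsilon |k|^\epsilon N^{-\gamma}$.

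The problem then reduces to the key estimate
\begin{equation*}
    \sum_{k \in \Z^d \setminus \{0\}} |\widehat{f_y}(k)|\,|k|^\epsilon \leq C_\epsilon\,\delta^{-\sigma - \epsilon}
\end{equation*}
uniformly in $y$ on the effective support, with $\sigma = 1$ in case \eqref{dotlem1} and $\sigma = 0$ in case \eqref{dotlem2}. The support conditions $\psi \in C_c^\infty((0,1)^d)$ together with $t, a \geq 1/100$ force $|y|$ to be bounded below by an absolute constant on the effective support, which I use freely. Once the key estimate is in hand, the first error term in the decomposition is bounded by integrating against $\psi(y)\,dy$ and the second by the crude bound $N^{-1}\sum_n \psi(v_n) = O(1)$, yielding the desired $C_\epsilon\,\delta^{-\sigma-\epsilon} N^{-\gamma}$.

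To prove the key estimate, I change variables in the $x$-integral defining $\widehat{f_y}(k)$ by writing $s = x \cdot y$ and letting $u'$ parametrize $y^\perp$, giving
\begin{equation*}
    \widehat{f_y}(k) = \frac{1}{|y|}\int_\R \Phi_\delta(s)\,e^{-2\pi i \alpha s}\,\Psi_y(s, k_\perp)\,ds, \qquad \alpha = \frac{k \cdot y}{|y|^2},
\end{equation*}
where $k_\perp$ is the orthogonal projection of $k$ onto $y^\perp$ and $\Psi_y(s, k_\perp) = \int \psi(x(s, u'))\,e^{-2\pi i k_\perp \cdot u'}\,du'$, which is Schwartz in $k_\perp$ uniformly in $s$. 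The $s$-integral is controlled pointwise by the Fourier decay of $\Phi_\delta$: for \eqref{dotlem1} one has $|\widehat{\rho_\delta}(\alpha)| \lesssim (1 + \delta|\alpha|)^{-K}$, while for \eqref{dotlem2} there is the extra factor $\min(1, |\alpha|^{-1})$ from $\widehat{\mathbf{1}_{[a,b]}}$. Summing over $k \in \Z^d$ then splits into parallel ($\alpha$) and perpendicular ($k_\perp$) contributions: the perpendicular sum is controlled uniformly by the Schwartz decay in $k_\perp$, and the parallel sum reduces to a one-dimensional integral of the form $\int_0^\infty (\text{pointwise bound})\,u^\epsilon\,du$ with $u \sim |y|\,|\alpha|$. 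For \eqref{dotlem1} the integrand is $O(1)$ for $u \lesssim |y|/\delta$ and decays rapidly beyond, yielding $\delta^{-1-\epsilon}$; for \eqref{dotlem2} the extra $|\alpha|^{-1}$ decay gives $O(|y|/u)$ in the intermediate range, sharpening the bound to $\delta^{-\epsilon}$. The principal technical hurdle is bookkeeping the lattice-point count over $\Z^d$ in the non-lattice-aligned $(k_\parallel, k_\perp)$ decomposition; this is handled by observing that for the relevant $y$ the induced density is $O(1)$, while the Schwartz decay in $k_\perp$ absorbs any residual misalignment.
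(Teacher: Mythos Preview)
Your proposal is correct and follows the same overall architecture as the paper: iterate the $\gamma$-uniform distribution hypothesis one variable at a time, reducing everything to the key $\ell^1$-type Fourier estimate $\sum_{k\neq 0}|\widehat{f_y}(k)|\,|k|^\epsilon \leq C_\epsilon \delta^{-\sigma-\epsilon}$ with $\sigma\in\{0,1\}$.

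The execution of that key estimate is where you diverge. You compute $\widehat{f_y}(k)$ by rotating coordinates to $(s,u')$ with $s=x\cdot y$, which forces you to sum over $\Z^d$ in the non-lattice-aligned frame $(k_\parallel,k_\perp)$; you correctly flag this as the main technical hurdle and dispose of it via density and Schwartz decay. The paper instead applies one-dimensional Fourier inversion to $\Phi_\delta$ \emph{before} expanding on the torus, obtaining
\[
\widehat{f_y}(k)=\int_{\R}\widehat{\Phi_\delta}(\lambda)\,\widehat\psi(k-\lambda y)\,d\lambda,
\]
so that the lattice sum is simply $\sum_{k\neq 0}|\widehat\psi(k-\lambda y)|\,|k|^\epsilon = O((1+|\lambda|)^\epsilon)$ by the rapid decay of $\widehat\psi$, and the $\delta$-dependence is isolated in the remaining $\lambda$-integral of $|\widehat{\Phi_\delta}(\lambda)|(1+|\lambda|)^\epsilon$. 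This cleanly gives $\delta^{-1-\epsilon}$ for $\Phi_\delta=\rho_\delta(\cdot-t)$ and, using $|\widehat{\mathbf 1}_{[a,b]}(\lambda)|\lesssim|\lambda|^{-1}$, the improved $\delta^{-\epsilon}$ for $\Phi_\delta=\mathbf 1_{[a,b]}*\rho_\delta$. The two computations are equivalent (your change of variables is the ``physical-space'' counterpart of the paper's convolution identity), but the paper's ordering sidesteps the rotated-lattice bookkeeping entirely.
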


\vskip.125in 

\begin{remark} Note, the dot product $x \cdot y$ is ambiguously defined for $x$ and $y$ on the torus, but identifying $\T^d$ with the cube $[0,1]^d$ fixes an interpretation. The cutoff $\psi$ is then used to deal with the complications arising at the boundary of $[0,1]^d$ where the torus has been cut. 
\end{remark} 

\vskip.125in 

\begin{proof}
	We claim that for $y$ fixed we have
	\begin{equation}\label{dotproductonsteroidslem claim}
		N^{-1} \sum_{n = 1}^N \rho_\delta(v_n \cdot y - t) \psi(v_n) = \int_{[0,1]^d} \rho_\delta(x \cdot y - t) \psi(x) \, dx + O_\epsilon(\delta^{-1-\epsilon} N^{-\gamma}).
	\end{equation}
	This claim applied twice yields
	\begin{align*}
		&N^{-2} \sum_{n = 1}^N \sum_{m = 1}^N \rho_\delta(v_n \cdot w_m - t) \psi(v_n) \psi(w_m) \\
		&= N^{-1} \sum_{m = 1}^N \int \rho_\delta(x \cdot w_m - t) \psi(x) \psi(w_m) \, dx + O_\epsilon(\delta^{-1 - \epsilon}N^{-\gamma}) \\
		&= \iint \rho_\delta(x \cdot y - t) \psi(x) \psi(y) \, dx \, dy + O_\epsilon(\delta^{-1-\epsilon} N^{-\gamma}),
	\end{align*}
	as needed for \eqref{dotlem1}.
	
	By applying Fourier inversion to $\rho_\delta$, we write the left side of \eqref{dotproductonsteroidslem claim} as
	\[
		\int_{-\infty}^\infty e^{-2\pi i \lambda t} \widehat \rho(\delta \lambda) \left( N^{-1} \sum_{n = 1}^N e^{2\pi i \lambda v_n \cdot y} \psi(v_n) \right) \, d\lambda.
	\]
	The quantity in the parentheses evaluates to
	\begin{align*}
		N^{-1} \sum_{n = 1}^N e^{2\pi i \lambda v_n \cdot y} \psi(v_n) &= N^{-1} \sum_{n = 1}^N \sum_{k \in \Z^d}  e^{2\pi i k \cdot v_n} \widehat \psi(k - \lambda y) \\
		&= \widehat \psi(-\lambda y) + \sum_{k \neq 0} \widehat \psi(k - \lambda y) \left( N^{-1} \sum_{n = 1}^N e^{2\pi i k \cdot v_n} \right).
	\end{align*}
	The rightmost term is $O_\epsilon(\lambda^\epsilon N^{-\gamma})$ by the rapid decay of $\widehat \psi$ and the $\gamma$-uniformly distributed property of the sequence $\{v_n\}$, and hence contributes the $O_\epsilon(\delta^{-1 - \epsilon} N^{-\gamma})$ to \eqref{dotproductonsteroidslem claim}. The contribution of the $k = 0$ term is
	\begin{align*}
		\int_{-\infty}^\infty e^{-2\pi i \lambda t} \widehat \rho(\delta \lambda) \widehat \psi(-\lambda y) \, d\lambda &= \int_{-\infty}^\infty \int e^{2\pi i \lambda (x \cdot y - t)} \widehat \rho(\delta \lambda) \psi(x) \, dx \, d\lambda \\
		&= \int \rho_\delta(x \cdot y - t) \psi(x) \, dx,
	\end{align*}
	which yields the main term. This concludes the proof of the claim \eqref{dotproductonsteroidslem claim}.
	
	\eqref{dotlem2} will similarly follow provided we show, for fixed $y$,
	\[
		N^{-1} \sum_{n = 1}^N \mathbf 1_{[a,b]} * \rho_\delta(v_n \cdot y) \psi(v_n) = \int_{[0,1]^d} \mathbf 1_{[a,b]} * \rho_\delta(x \cdot y) \psi(x) \, dx + O_\epsilon(\delta^{-\epsilon} N^{-\gamma}),
	\]
	which follows by tracing through the proof of \eqref{dotproductonsteroidslem claim} above after replacing $\widehat \rho(\delta \lambda)$ with $\widehat{\mathbf 1}_{[a,b]}(\lambda) \widehat \rho(\delta \lambda)$ and using
	\[
		|\widehat{\mathbf 1}_{[a,b]}(\lambda)| \leq C |\lambda|^{-1}.
	\]
\end{proof}

Next, we use the lemma and the strategy of Section \ref{FRAMEWORK} to estimate the remainder in Theorem \ref{dotproductonsteroidsth}. We write
\[
	|R(a,b,N)| = \left| N^{-2} \sum_{n,m \leq N} \mathbf 1_{\Omega_{a,b}}(v_n, w_m) \psi(v_n) \psi(w_m) - \iint_{\Omega_{a,b}} \psi(x) \psi(y) \, dx \, dy \right| \leq I + II + III
\]
where here
\begin{align*}
	I &= N^{-2} \sum_{n,m \leq N} \left| \mathbf 1_{\Omega_{a,b}}(v_n, w_m) - \int_a^b \rho_\delta(v_n \cdot w_m - t) \, dt \right| \psi(v_n) \psi(w_m),\\
	II &= \iint \left| \mathbf 1_{\Omega_{a,b}}(x,y) - \int_a^b \rho_\delta(x \cdot y - t) \right| \psi(x) \psi(y) \, dx \, dy, \\ 
	III &= \left| N^{-2} \sum_{n,m\leq 1} \int_a^b \rho_\delta(v_n \cdot w_m - t) \, dt \psi(v_n)\psi(w_m) - \iint \int_a^b \rho_\delta(x \cdot y - t) \, dt \psi(x) \psi(y) \, dx \, dy \right|.
\end{align*}
To estimate the terms $I$, $II$, and $III$, we will need a couple estimates. First, we select $\rho$ so that $\rho(t) \geq c > 0$ on $[-1/2,1/2]$. Then, similar to the argument in Section \ref{SPHERES}, we write
\[
	|\mathbf 1_{[a,b]}(t) - \mathbf 1_{[a,b]} * \rho_\delta(t)| \leq 2\delta c^{-1} (\rho_{2\delta}(t - a) + \rho_{2\delta}(t - b)).
\]
From this, we have for each $x,y$,
\begin{equation}\label{planes eq 1}
	\left|\mathbf 1_{\Omega_{a,b}}(x,y) - \int_a^b \rho_\delta(x \cdot y - t) \, dt \right| \leq 2\delta c^{-1} ( \rho_{2\delta}(x \cdot y - a) + \rho_{2\delta}(x \cdot y - b) ).
\end{equation}
To estimate the right side, we will assume $\delta < \frac{1}{400}$ so that for $t \in [a,b]$, $|\nabla_{x,y} (x \cdot y - t)|$ is bounded below by a uniform constant for $x \cdot y - t$ in the support of $\rho_{2\delta}$. It follows
\begin{equation} \label{planes, right side tool}
	\iint \rho_{2\delta}(x \cdot y - t) \psi(x) \psi(y) \, dx \, dy \lesssim 1.
\end{equation}
This immediately yields
\[
	II \lesssim \delta.
\]
\eqref{dotlem2} yields
\[
	III = O_\epsilon(\delta^{-\epsilon} N^{-\gamma}).
\]
By \eqref{planes eq 1}, \eqref{dotlem1}, and \eqref{planes, right side tool}, the remaining term is bounded by
\begin{align*}
	I &\leq 2\delta c^{-1} N^{-2} \sum_{n,m \leq 1} \left( \rho_{2\delta}(v_n \cdot w_m - a) + \rho_{2\delta}(v_n \cdot w_m - b) \right) \psi(v_n) \psi(w_m) \\
	&= 2\delta c^{-1} \iint \left( \rho_{2\delta}(x \cdot y - a) + \rho_{2\delta}(x \cdot y - b) \right) \psi(x) \psi(y) \, dx \, dy + O_\epsilon(\delta^{-\epsilon} N^{-\gamma}) \\
	&\leq C \delta + O_\epsilon(\delta^{-\epsilon} N^{-\gamma}).
\end{align*}
Putting everything together, we have
\[
	|R(a,b,N)| \lesssim \delta + C_\epsilon \delta^{-\epsilon} N^{-\gamma}.
\]
The theorem follows after optimizing at $\delta = N^{-\gamma}$.

\vskip.125in 

\newpage

\vskip.125in 

\end{document}